\newtheorem{theorem}{Theorem}
\newtheorem{lemma}{Lemma}
\theoremstyle{remark}
\newtheorem{remark}{Remark}
\begin{document}

\title[A LEMMA CONCERNING ANALYTIC FUNCTIONS WITH POSITIVE REAL PARTS]{\large  SOME APPLICATIONS OF A LEMMA CONCERNING ANALYTIC FUNCTIONS HAVING POSITIVE REAL PARTS IN THE UNIT DISK}

\author[T. O. Opoola AND K. O. BABALOLA]{T. O. Opoola AND K. O. BABALOLA}

\begin{abstract}
In this paper we give some applications of a lemma of Babalola and Opoola \cite{BO}, which is a classical extension of an earlier one by Lewandowski, Miller and Zlotkiewicz \cite{LMZ}. The applications were given via a new generalization of some well-known subclasses of univalent functions, and they unify many known results.
\end{abstract}



\maketitle

\section{Introduction}
Let $A$ denote the class of functions:
\[
f(z)=z+a_2z^2+...
\]
which are analytic in the unit disk $E=\{z\in \mathbb{C}\colon
|z|<1\}$. Consider the operators $D^n$ $(n\in N_0=\{0,\;1,\;2,\;...\})$, which is the Salagean derivative operator defined as $D^nf(z)=D(D^{n-1}f(z))=z[D^{n-1}f(z)]^\prime$ 	 
with $D^0f(z)=f(z)$, and $I^\sigma$ ($\sigma$ real) a one-parameter Jung-Kim-Srivastava integral operator defined as $I^\sigma f(z)=\tfrac{2^\sigma}{z\Gamma(\sigma)} \int_0^z\left(\log\tfrac{z}{t}\right)^{\sigma-1}f(t)dt$. The expression $D^nf(z)$ is equivalent to 
\begin{equation}
D^nf(z)=z+\sum_{k=2}^\infty k^na_kz^k\, \label{1}
\end{equation}
while $I^\sigma f(z)$ gives
\begin{equation}
I^\sigma f(z)=z+\sum_{k=2}^\infty \left(\frac{2}{k+1}\right)^\sigma a_kz^k\, \label{2}
\end{equation}
(see \cite{JKS}).  From ~(\ref{1}) the following identity can be deduced:
\begin{equation}
z[I^{\sigma+1}f(z)]^\prime=2I^\sigma f(z)-I^{\sigma+1}f(z)\, \label{3}
\end{equation}
The operator $I^\sigma$ is closely related to the multiplier transformation studied by Flett \cite{TMF}. Recently, Liu \cite{JL} considered classes of functions $f\in A$ whose integrals $I^\sigma f(z)$ are starlike, convex, close-to-convex and quasi-convex in the unit disk. He denoted these classes respectively by $S_\sigma^{*}(\gamma)$, $C_\sigma(\gamma)$, $K_\sigma(\beta,\gamma)$ and $K_\sigma^{*}(\beta,\gamma)$ . He proved inclusion theorems and closure under the Bernardi integral. His work is the main motivation for the present paper.

Salagean \cite{GSS} introduced the operator $D^n$ and used it to generalize the concepts of starlikeness and convexity of functions in the unit disk as follows: a function $f\in A$ is said to belong to the class $S_n(\gamma)$ if and only if
\[
Re\frac{D^{n+1}f(z)}{D^nf(z)}>\gamma
\]
A similar generalization of close-to-convexity and quasi-convexity in the unit disk was achieved by Blezu in \cite{DB} as follows: a function $f\in A$ is said to belong to the class $K_n(\beta)$ if and only if
\[
Re\frac{D^{n+1}f(z)}{D^ng(z)}>\beta
\]
with $g\in S_0(0)$. The operator $D^n$ has been employed by various authors to define several classes of analytic and univalent functions \cite{SA, KOB, BO, DB, SK, MO, TOO, GSS}. We define an operator $L_n^\sigma\colon A\to A$ as follows
\medskip

 {\sc Definition 1.} Let $f\in A$. We define the operators
$L_n^\sigma\colon A\to A$ as follows:
\[
L_n^\sigma f(z)=D^n(I^\sigma f(z)).
\]
Using ~(\ref{1}) and ~(\ref{2}) we find that
\[
L_n^\sigma f(z)=z+\sum_{k=2}^\infty k^n\left(\frac{2}{k+1}\right)^\sigma a_kz^k=I^\sigma(D^nf(z)).
\]
Then we have $L_n^0f(z)=D^nf(z)$ and $L_0^\sigma f(z)=I^\sigma f(z)$  . Applying $D^n$ on the identity ~(\ref{3}) we deduce the following for $L_n^\sigma$.
\begin{equation}
L_{n+1}^{\sigma+1}f(z)=2L_n^\sigma f(z)-L_n^{\sigma+1}f(z).\, \label{4}
\end{equation}

Now let $\gamma\neq 1$ be a nonnegative real number. Define the relation "$\sim$" as "$>$" whenever $0\leq\gamma<1$, and "$<$" if $\gamma>1$.  Suppose $\beta$ satisfies the same conditions as $\gamma$. Then using the operator $L_n^\sigma$ we introduce the following new generalizations of subclasses of univalent functions.
\medskip

 {\sc Definition 2.} We say a function $f\in A$ belongs to the class $B_n^\sigma(\gamma)$ if and only if
\[
Re\frac{L_{n+1}^\sigma f(z)}{L_n^\sigma f(z)}\sim\gamma
\]
\medskip

 {\sc Definition 3.} We say a function $f\in A$ belongs to the class $K_n^\sigma(\beta,\gamma)$ if and only if
\[
Re\frac{L_{n+1}^\sigma f(z)}{L_n^\sigma g(z)}\sim\beta
\]
with $g\in B_n^\sigma(\gamma)$, $0\leq\gamma<1$.

\begin{remark} The classes $B_n^\sigma(\gamma)$ and $K_n^\sigma(\beta,\gamma)$ consist of functions $f\in A$ whose integrals $I^\sigma f(z)$ belong to some generalized classes of starlike, convex, close-to-convex and quasi-convex functions in the unit disk. It can be observed that by specifying certain values of the underlying parameters we obtain the following important subclasses, which have been studied by many authors

{\rm(i)}	$B_0^0(\gamma)$, $0\leq\gamma<1$, is the well known class of starlike functions, $S_0(\gamma)$, of order $\gamma$.

{\rm(ii)}	$B_1^0(\gamma)$, $0\leq\gamma<1$, is the well known class of convex functions, $S_1(\gamma)$, of order $\gamma$.

{\rm(iii)}	$B_n^0(\gamma)$, $0\leq\gamma<1$, is the Salagean generalization, $S_n(\gamma)$, described above.

{\rm(iv)}	$B_n^0(\gamma)$, $\gamma=\tfrac{n+2}{n+1}$, is class $O_n(\gamma)$ studied by Obradovic in \cite{MO}.

{\rm(v)}	$B_0^\sigma(\gamma)$, $0\leq\gamma<1$, is class $S_\sigma^{*}(\gamma)$ introduced by Liu in \cite{JL}.

{\rm(vi)}	$B_1^\sigma(\gamma)$, $0\leq\gamma<1$, is class $C_\sigma(\gamma)$ introduced by Liu also in \cite{JL}.

and similarly,

{\rm(vii)}	$K_0^0(\beta,\gamma)$, $0\leq\beta<1$, is the well known class of close-to-convex functions, $K_0(\beta,\gamma)$, of order $\beta$, type $\gamma$.

{\rm(viii)}	$K_1^0(\beta,\gamma)$, $0\leq\beta<1$, is the well known class of quasi-convex functions, $K_1(\beta,\gamma)$, of order $\beta$, type $\gamma$.

{\rm(ix)}	$K_n^0(\beta,\gamma)$, $0\leq\beta<1$, is the Blezu generalization, $K_n(\beta, 0)$,  of close-to-convexity.

{\rm(x)}	$K_0^\sigma(\beta,\gamma)$, $0\leq\beta<1$, is the class, $K_\sigma(\beta,\gamma)$, introduced by Liu in \cite{JL}.

{\rm(xi)}	$K_1^\sigma(\beta,\gamma)$, $0\leq\beta<1$, is the class, $K_\sigma^{*}(\beta,\gamma)$, also introduced by Liu in \cite{JL}.
\end{remark}

The purpose of the present paper is to demonstrate the resourcefulness of a lemma of Babalola and  Opoola \cite{BO} which extends an earlier one by Lewandowski, Miller and Zlotkiewicz \cite{LMZ}. We use the lemma to establish inclusion relations for the classes $B_n^\sigma(\gamma)$ and $K_n^\sigma(\beta,\gamma)$, and also show that the classes are preserved under the Bernadi integral transformation.

\section{The Lemma}
First we recall the basic definitions leading to the lemma as contained in \cite{BO}.
\medskip

 {\sc Definition 4.} Let $u=u_1+u_2i$, $v=v_1+v_2i$ and $\gamma\neq 1$ be a nonnegative real number. Define $\Psi_\gamma$ as the set of functions $\psi(u,v):C\times C\to C$ satisfying:
 
{\rm(a)} $\psi(u,v)$ is continuous in a domain $\Omega$ of $C\times C$,

{\rm(b)} $(1,0)\in\Omega$ and Re$\psi(1,0)>0$,

{\rm(c)} Re$\psi(\gamma+(1-\gamma)u_2i, v_1)\leq\gamma$ when $(\gamma+(1-\gamma)u_2i, v_1)\in\Omega$ and $2v_1\leq -(1-\gamma)(1+u_2^2)$ for $0\leq\gamma<1$,

{\rm(d)} Re$\psi(\gamma+(1-\gamma)u_2i, v_1)\geq\gamma$ when $(\gamma+(1-\gamma)u_2i, v_1)\in\Omega$ and $2v_1\geq(\gamma-1)(1+u_2^2)$ for $\gamma>1$.

Several examples of members of the set $\Psi_\gamma$ have been mentioned in \cite{BO}. We would make recourse to the following:

{\rm(i)} $\psi_1(u,v)=u+v/(\xi+\alpha)$, $\xi$ is real, $\xi+Re\alpha>0$ and $\Omega=C\times C$.

{\rm(ii)} $\psi_2(u,v)=u+v/(\xi+u)$, $\xi$ is real, $\xi+\gamma>0$ and $\Omega=[C-\{-\xi\}]\times C$.
\medskip

 {\sc Definition 5.} Let $\psi\in\Psi_\gamma$ with corresponding domain $\Omega$. Define $P(\Psi_\gamma)$ as the set of functions $p(z)$ given as $(p(z)-\gamma)/(1-\gamma)=1+p_1z+ p_2z^2+...$ which are regular in $E$ and satisfy:
 
{\rm(i)} $(p(z),zp^\prime(z))\in\Omega$

{\rm(ii)} Re$\psi(p(z),zp^\prime(z))\sim\gamma$ when $z\in E$.

The concepts in (ii) are not vacuous (see \cite{BO}).

Now, we state the lemma.

\begin{lemma}(\cite{BO})
Let $p\in P(\Psi_\gamma)$. Then Re $p(z)\sim\gamma$.
\end{lemma}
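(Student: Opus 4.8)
The plan is to argue by contradiction via Jack's lemma, following the familiar admissible-function argument. Take first the case $0\le\gamma<1$, so ``$\sim$'' is ``$>$''. Since the normalization in Definition 5 gives $p(0)=1$, we have $\operatorname{Re}p(0)=1>\gamma$, so the open set $\{z\in E:\operatorname{Re}p(z)>\gamma\}$ contains $0$. Assume it is not all of $E$; then there is $r_0\in(0,1)$ and $z_0$ with $|z_0|=r_0$, $\operatorname{Re}p(z)>\gamma$ for $|z|<r_0$, and $\operatorname{Re}p(z_0)=\gamma$. Write $p(z_0)=\gamma+(1-\gamma)u_2i$ with $u_2$ real.

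Next I introduce the M\"obius map that carries $\{\operatorname{Re}w>\gamma\}$ onto the unit disk and sends the base value $p(0)=1$ to $0$, namely
\[
w(z)=\frac{p(z)-1}{p(z)-(2\gamma-1)}.
\]
On $\overline{D(0,r_0)}$ one has $\operatorname{Re}p\ge\gamma>2\gamma-1=\operatorname{Re}(2\gamma-1)$, so $p\neq 2\gamma-1$ there and $w$ is analytic on a neighbourhood of this closed disk, with $w(0)=0$, $|w(z)|<1$ for $|z|<r_0$, and $|w(z_0)|=1$. Inverting yields $p=\bigl(1-(2\gamma-1)w\bigr)/(1-w)$, hence the two identities I will use:
\[
p-\gamma=(1-\gamma)\,\frac{1+w}{1-w},\qquad zp'(z)=\frac{2(1-\gamma)\,zw'(z)}{\bigl(1-w(z)\bigr)^{2}}.
\]
By Jack's lemma there is a real $k\ge 1$ with $z_0w'(z_0)=k\,w(z_0)$. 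Because $|w(z_0)|=1$, the first identity makes $(1+w(z_0))/(1-w(z_0))$ purely imaginary and equal to $u_2i$; feeding this together with the elementary relation $4w/(1-w)^{2}=\bigl((1+w)/(1-w)\bigr)^{2}-1$ into the second identity, a short computation gives
\[
z_0p'(z_0)=-\tfrac{k}{2}(1-\gamma)(1+u_2^{2}),
\]
a nonpositive real number. So with $v_1=\operatorname{Re}\bigl(z_0p'(z_0)\bigr)=z_0p'(z_0)$ we get $2v_1\le-(1-\gamma)(1+u_2^{2})$ since $k\ge1$.

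Now $\bigl(p(z_0),z_0p'(z_0)\bigr)\in\Omega$ by Definition 5(i) (note $z_0\in E$), and this pair has the form $\bigl(\gamma+(1-\gamma)u_2i,\,v_1\bigr)$ with $2v_1\le-(1-\gamma)(1+u_2^{2})$; hence Definition 4(c) forces $\operatorname{Re}\psi\bigl(p(z_0),z_0p'(z_0)\bigr)\le\gamma$, contradicting Definition 5(ii), which for $0\le\gamma<1$ asserts $\operatorname{Re}\psi\bigl(p(z),zp'(z)\bigr)>\gamma$ throughout $E$. Therefore $\operatorname{Re}p(z)>\gamma$ in $E$. The case $\gamma>1$ is identical in structure: here $p(0)=1<\gamma$, the same $w$ carries $\{\operatorname{Re}w<\gamma\}$ onto the disk (now $2\gamma-1>\gamma$, so again $p\neq 2\gamma-1$ wherever $\operatorname{Re}p<\gamma$), the analogous computation gives $z_0p'(z_0)=\tfrac{k}{2}(\gamma-1)(1+u_2^{2})$ with $k\ge1$, so $2v_1\ge(\gamma-1)(1+u_2^{2})$, and Definition 4(d) now contradicts $\operatorname{Re}\psi<\gamma$. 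Condition (b) of Definition 4, which gives $(1,0)\in\Omega$, is what lets the argument start cleanly at the origin.

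I expect the only genuinely fiddly points to be bookkeeping rather than ideas: checking that $w$ is analytic up to and including $z_0$ so that Jack's lemma legitimately applies, and pushing the algebra through to see that $z_0p'(z_0)$ reduces to the real value $-\tfrac{k}{2}(1-\gamma)(1+u_2^{2})$ which places the pair $\bigl(p(z_0),z_0p'(z_0)\bigr)$ exactly in the region prescribed by Definition 4(c) (respectively (d)). Once that is in hand the contradiction is immediate from the defining properties of $\Psi_\gamma$.
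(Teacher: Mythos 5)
Your argument is correct, and it is the standard admissible-function proof: locate a first boundary point $z_0$ where $\mathrm{Re}\,p(z_0)=\gamma$, transfer to the disk by the M\"obius map, apply Jack's lemma to get $z_0p'(z_0)=-\tfrac{k}{2}(1-\gamma)(1+u_2^2)$ with $k\ge 1$, and observe that the pair $(p(z_0),z_0p'(z_0))$ then lands exactly in the region excluded by conditions (c)/(d) of Definition 4, contradicting Definition 5(ii). The present paper does not prove the lemma at all --- it is quoted from the cited source \cite{BO}, which extends the Lewandowski--Miller--Zlotkiewicz result by precisely this Jack's-lemma technique --- so your proof is essentially the intended one, and your computations (including the two-case treatment for $0\le\gamma<1$ and $\gamma>1$) check out.
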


\section{Main Results}
\begin{theorem}
$K_{n+1}^\sigma(\beta,\gamma)\subset K_n^\sigma(\beta,\gamma)$.
\end{theorem}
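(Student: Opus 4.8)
The plan is to reduce the inclusion to an application of Lemma 1 with the example function $\psi_2(u,v)=u+v/(\xi+u)$. Let $f\in K_{n+1}^\sigma(\beta,\gamma)$, so there is $g\in B_n^\sigma(\gamma)$ with $\mathrm{Re}\,\dfrac{L_{n+2}^\sigma f(z)}{L_{n+1}^\sigma g(z)}\sim\beta$, and I must show $\mathrm{Re}\,\dfrac{L_{n+1}^\sigma f(z)}{L_n^\sigma g(z)}\sim\beta$. First I would introduce the two natural auxiliary functions
\[
p(z)=\frac{1}{1-\beta}\left(\frac{L_{n+1}^\sigma f(z)}{L_n^\sigma g(z)}-\beta\right),\qquad
q(z)=\frac{1}{1-\gamma}\left(\frac{L_{n+1}^\sigma g(z)}{L_n^\sigma g(z)}-\gamma\right),
\]
both regular in $E$ with $p(0)=q(0)=1$; since $g\in B_n^\sigma(\gamma)$, Lemma 1 (or the hypothesis directly) gives $\mathrm{Re}\,q(z)\sim\gamma$, equivalently $\mathrm{Re}\big(\gamma+(1-\gamma)q(z)\big)\sim\gamma$, which I will use to control the denominator data below.

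The key computational step is to differentiate the identity $L_{n+1}^\sigma f(z)=\big[(1-\beta)p(z)+\beta\big]L_n^\sigma g(z)$ logarithmically and use the Salagean relation $z\,[L_n^\sigma h(z)]'=L_{n+1}^\sigma h(z)$ valid for any $h\in A$. This yields
\[
\frac{L_{n+2}^\sigma f(z)}{L_{n+1}^\sigma g(z)}
=\beta+(1-\beta)p(z)+(1-\beta)\,\frac{zp'(z)}{\dfrac{L_{n+1}^\sigma g(z)}{L_n^\sigma g(z)}}
=\beta+(1-\beta)\left(p(z)+\frac{zp'(z)}{\gamma+(1-\gamma)q(z)}\right).
\]
Thus if I set $\xi+u$ to play the role of $\gamma+(1-\gamma)q(z)$, I recognize the bracket as $\psi_2(p(z),zp'(z))$ with the ``variable shift'' $\xi$ being governed by $q$. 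The honest way to phrase this is: define $\psi(u,v)=u+v/\big(\gamma+(1-\gamma)q(z)\big)$ treating $q(z)$ momentarily as a parameter, or—cleaner—observe that the hypothesis says $\mathrm{Re}\,\psi_2\big(p(z),zp'(z)\big)\sim\beta$ only after we know $\mathrm{Re}\big(\gamma+(1-\gamma)q(z)\big)>0$ (when $0\le\gamma<1$) so that $\xi:=\gamma$ satisfies $\xi+\mathrm{Re}\,(\text{denominator contribution})>0$; this is exactly the admissibility condition $\xi+\gamma>0$ required for $\psi_2\in\Psi_\gamma$, here with the base-point real part playing the role of $\xi$.

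I would then verify that the composite function $\psi(u,v)$ so obtained lies in $\Psi_\beta$: continuity on the appropriate $\Omega$ is immediate, condition (b) reads $\mathrm{Re}\,\psi(1,0)=1>0$, and conditions (c)/(d) are checked exactly as in the verification that $\psi_2\in\Psi_\beta$, using at the critical boundary point $u=\beta+(1-\beta)u_2 i$, $v=v_1$ with $2v_1\le-(1-\beta)(1+u_2^2)$ (resp.\ $\ge(\beta-1)(1+u_2^2)$ for $\beta>1$) that $\mathrm{Re}\big(v_1/(\gamma+(1-\gamma)q)\big)$ has the right sign because the denominator has positive real part. Concretely, writing $\gamma+(1-\gamma)q(z)=\rho e^{i\theta}$ with $\rho>0$ and $\cos\theta>0$, one gets $\mathrm{Re}\,\psi(\beta+(1-\beta)u_2 i,v_1)=\beta+v_1\cos\theta/\rho\le\beta$ in the case $0\le\beta<1$, and the reversed inequality for $\beta>1$. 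Hence $p\in P(\Psi_\beta)$, and Lemma 1 gives $\mathrm{Re}\,p(z)\sim\beta$, i.e. $\mathrm{Re}\,\dfrac{L_{n+1}^\sigma f(z)}{L_n^\sigma g(z)}\sim\beta$, so $f\in K_n^\sigma(\beta,\gamma)$.

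The main obstacle I anticipate is the bookkeeping needed to legitimately treat $q(z)$ as a constant ``$\xi$'' inside $\psi_2$: the value $\gamma+(1-\gamma)q(z)$ genuinely varies with $z$, so strictly speaking one is not applying Lemma 1 with a fixed $\psi\in\Psi_\beta$ but with a $z$-dependent family. The clean fix—and the step that needs care—is to note that conditions (c) and (d) of Definition 4 only ever need to be checked at points of the form $\big(\beta+(1-\beta)u_2 i,\,v_1\big)$, and at such a point the only thing about $q(z)$ that enters is that the denominator $\gamma+(1-\gamma)q(z)$ has strictly positive real part, which holds uniformly because $g\in B_n^\sigma(\gamma)$. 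So one either invokes the version of the lemma in \cite{BO} that already permits this (the $\psi_2$ example is stated there precisely to cover quotient-type functionals of exactly this shape), or one absorbs $q$ into the domain $\Omega$ and re-checks admissibility pointwise as sketched above. Everything else is routine algebra with the operator identity $z[L_n^\sigma h]'=L_{n+1}^\sigma h$.
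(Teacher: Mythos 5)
Your proposal follows essentially the same route as the paper: differentiate $L_{n+1}^\sigma f=p\cdot L_n^\sigma g$ using $z[L_n^\sigma h]'=L_{n+1}^\sigma h$, obtain $\tfrac{L_{n+2}^\sigma f}{L_{n+1}^\sigma g}=p+zp'\cdot\tfrac{L_n^\sigma g}{L_{n+1}^\sigma g}$, and apply Lemma 1 with a quotient-type $\psi$ whose denominator $\alpha(z)=L_{n+1}^\sigma g/L_n^\sigma g$ has positive real part because $g\in B_n^\sigma(\gamma)$, $0\le\gamma<1$. Your closing discussion of why the $z$-dependence of that denominator is harmless (only $\mathrm{Re}\,\alpha(z)>0$ enters conditions (c)/(d)) is exactly the point on which the paper silently relies, so that part is sound. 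Two slips should be fixed, though. First, the relevant example is $\psi_1(u,v)=u+v/(\xi+\alpha)$ with the \emph{auxiliary} function $\alpha$ and $\xi=0$, not $\psi_2(u,v)=u+v/(\xi+u)$: here the denominator is controlled by $g$, not by $p$ itself, so casting it as ``$\xi+u$'' is a mislabel ($\psi_2$ is the right tool later, in Theorem 5, where the denominator really is $c+p(z)$). Second, your double normalization breaks the final implication as written: with $p=\bigl(\tfrac{L_{n+1}^\sigma f}{L_n^\sigma g}-\beta\bigr)/(1-\beta)$ the hypothesis $\mathrm{Re}\,\tfrac{L_{n+2}^\sigma f}{L_{n+1}^\sigma g}\sim\beta$ translates to $\mathrm{Re}\bigl(p+zp'/\alpha\bigr)\sim 0$, not $\sim\beta$, so you would have to apply the lemma in $\Psi_0$ to get $\mathrm{Re}\,p\sim 0$ and then deduce $\mathrm{Re}\,\tfrac{L_{n+1}^\sigma f}{L_n^\sigma g}\sim\beta$; your chain ``$p\in P(\Psi_\beta)$, hence $\mathrm{Re}\,p\sim\beta$, i.e.\ $\mathrm{Re}\,\tfrac{L_{n+1}^\sigma f}{L_n^\sigma g}\sim\beta$'' conflates the normalized and unnormalized functions (note that Definition 5 already builds the normalization into the series expansion, so the lemma is meant to be applied to the ratio itself). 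The paper sidesteps both issues by taking $p$ to be the ratio $L_{n+1}^\sigma f/L_n^\sigma g$ directly and invoking $\psi_1$ with $\xi=0$; with those two repairs your argument coincides with the paper's.
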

\begin{proof}
Let $f\in K_{n+1}^\sigma(\beta,\gamma)$. Set 
\[
p(z)=\frac{L_{n+1}^\sigma f(z)}{L_n^\sigma g(z)}
\]
Then we have
\[
\frac{L_{n+2}^\sigma f(z)}{L_{n+1}^\sigma g(z)}=p(z)+zp^\prime(z)\frac{L_n^\sigma g(z)}{L_{n+1}^\sigma g(z)}=\psi(p(z),zp^\prime(z))
\]
Let $\alpha(z)=\frac{L_{n+1}^\sigma g(z)}{L_n^\sigma g(z)}$. Then Re$\alpha(z)>0$ since $g\in B_n^\sigma(\gamma)$ for $0\leq\gamma<1$. Thus by applying Lemma 1 on  $\psi_1(u,v)=u+v/(\xi+\alpha)$, with $\xi=0$, we have the implication that Re$\psi(p(z), zp^\prime(z))\sim\beta\Rightarrow$ Re$p(z)\sim\beta$, which proves the inclusion.
\end{proof}
\begin{theorem}
$K_n^\sigma(\beta,\gamma)\subset K_n^{\sigma+1}(\beta,\gamma)$.
\end{theorem}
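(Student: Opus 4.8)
The plan is to show that the very function $g$ that witnesses $f\in K_n^\sigma(\beta,\gamma)$ also witnesses $f\in K_n^{\sigma+1}(\beta,\gamma)$. So let $f\in K_n^\sigma(\beta,\gamma)$, with $g\in B_n^\sigma(\gamma)$, $0\le\gamma<1$, and $Re\bigl(L_{n+1}^\sigma f(z)/L_n^\sigma g(z)\bigr)\sim\beta$. Two things then need to be checked: (i) $g\in B_n^{\sigma+1}(\gamma)$, and (ii) $Re\bigl(L_{n+1}^{\sigma+1}f(z)/L_n^{\sigma+1}g(z)\bigr)\sim\beta$. Both will follow from Lemma 1, with the identity (\ref{4}) supplying the algebraic bridge between the $\sigma$- and $(\sigma+1)$-levels.

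For (i), put $q(z)=L_{n+1}^{\sigma+1}g(z)/L_n^{\sigma+1}g(z)$. Reading (\ref{4}) as $2L_n^\sigma g=z[L_n^{\sigma+1}g]^\prime+L_n^{\sigma+1}g$ and applying the Salagean operator $D$ (so that $L_{n+1}^\sigma g=z[L_n^\sigma g]^\prime$ becomes a second-order differential expression in $G:=L_n^{\sigma+1}g$), one divides through and, using $zG^\prime=q(z)G$, finds after a short computation
\[
\frac{L_{n+1}^\sigma g(z)}{L_n^\sigma g(z)}=q(z)+\frac{zq^\prime(z)}{1+q(z)}=\psi_2\bigl(q(z),zq^\prime(z)\bigr),
\]
where $\psi_2(u,v)=u+v/(\xi+u)$ with $\xi=1$; since $\xi+\gamma=1+\gamma>0$, $\psi_2\in\Psi_\gamma$. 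As $g\in B_n^\sigma(\gamma)$, the left side has real part $\sim\gamma$, so Lemma 1 yields $Re\,q(z)\sim\gamma$. (This is precisely the inclusion $B_n^\sigma(\gamma)\subset B_n^{\sigma+1}(\gamma)$; if that has already been recorded, one may simply quote it here.)

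For (ii), put $p(z)=L_{n+1}^{\sigma+1}f(z)/L_n^{\sigma+1}g(z)$. Performing the same manipulation of (\ref{4}) on the numerator (for $f$) and the denominator (for $g$) gives
\[
\frac{L_{n+1}^\sigma f(z)}{L_n^\sigma g(z)}=p(z)+\frac{zp^\prime(z)}{1+q(z)}=\psi_1\bigl(p(z),zp^\prime(z)\bigr),
\]
with $\psi_1(u,v)=u+v/(\xi+\alpha)$, $\xi=1$ and $\alpha=\alpha(z)=q(z)$; here $\xi+Re\,\alpha(z)=1+Re\,q(z)>0$ by (i), so $\psi_1\in\Psi_\beta$. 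Since $f\in K_n^\sigma(\beta,\gamma)$, the left side has real part $\sim\beta$, and Lemma 1 now gives $Re\,p(z)\sim\beta$. Combined with (i), this says $g\in B_n^{\sigma+1}(\gamma)$ and $Re\bigl(L_{n+1}^{\sigma+1}f(z)/L_n^{\sigma+1}g(z)\bigr)\sim\beta$, that is, $f\in K_n^{\sigma+1}(\beta,\gamma)$, which is the desired inclusion.

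The step demanding care — the main obstacle — is the reduction that turns the $\sigma$-level ratios into the two displayed expressions: differentiate (\ref{4}) once, then substitute repeatedly using $zF^\prime=p(z)G$ and $zG^\prime=q(z)G$ (with $F:=L_n^{\sigma+1}f$, $G:=L_n^{\sigma+1}g$) together with the relations $zp^\prime=p+z^2F^{\prime\prime}/G-pq$ and $zq^\prime=q+z^2G^{\prime\prime}/G-q^2$ to clear the second derivatives. One should also verify, as usual, that $p$ and $q$ are regular in $E$ and that the points $(p(z),zp^\prime(z))$ and $(q(z),zq^\prime(z))$ remain in the domains of $\psi_1$ and $\psi_2$, so that Lemma 1 genuinely applies; this is routine for the classes at hand.
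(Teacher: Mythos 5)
Your proof is correct and follows essentially the same route as the paper's: set $p(z)=L_{n+1}^{\sigma+1}f(z)/L_n^{\sigma+1}g(z)$, use identity (4) to rewrite $L_{n+1}^{\sigma}f(z)/L_{n}^{\sigma}g(z)$ as $p(z)+zp^\prime(z)/(1+\alpha(z))$ with $\alpha(z)=L_{n+1}^{\sigma+1}g(z)/L_n^{\sigma+1}g(z)$, and apply Lemma 1 to $\psi_1$ with $\xi=1$. Your explicit step (i) --- verifying that $g$ itself lies in $B_n^{\sigma+1}(\gamma)$, so that it remains a valid witness and $\mathrm{Re}\,\alpha(z)>0$ --- is a point the paper treats only implicitly (it asserts $\mathrm{Re}\,\alpha(z)>0$ directly from $g\in B_n^\sigma(\gamma)$, which really requires Theorem 4), so this is a welcome tightening rather than a different approach.
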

\begin{proof}
Let $f\in K_n^\sigma(\beta,\gamma)$. Define 
\[
p(z)=\frac{L_{n+1}^{\sigma+1}f(z)}{L_n^{\sigma+1}g(z)}
\]
Then with simple calculation, using the identity ~(\ref{4}) we find that
\[
\frac{L_{n+1}^\sigma f(z)}{L_n^\sigma g(z)}=\frac{L_{n+2}^{\sigma+1}f(z)+L_{n+1}^{\sigma+1}f(z)}{L_{n+1}^{\sigma+1}g(z)+L_n^{\sigma+1}g(z)}
\]
Hence we obtain
\[
\frac{L_{n+1}^\sigma f(z)}{L_n^\sigma g(z)}=p(z)+\frac{zp^\prime(z)}{1+\alpha(z)}=\psi(p(z),zp^\prime(z))
\]
where $\alpha(z)=\frac{L_{n+1}^{\sigma+1}g(z)}{L_n^{\sigma+1}g(z)}$. Also since $g\in B_n^\sigma(\gamma)$ for $0\leq\gamma<1$ we have Re$\alpha(z)>0$. By applying Lemma 1 again on  $\psi_1(u,v)=u+v/(\xi+\alpha)$, with $\xi=1$, we have the implication that Re$\psi(p(z), zp^\prime(z))\sim\beta\Rightarrow$ Re$p(z)\sim\beta$. This proves the inclusion.
\end{proof}

The following two theorems giving corresponding inclusion relation for the class $B_n^\sigma(\gamma)$ can be proved {\em mutatis mutandis} as we have done above for the class $K_n^\sigma(\beta,\gamma)$. In fact the proofs are much simpler and thus omitted.

\begin{theorem}
$B_{n+1}^\sigma(\gamma)\subset B_n^\sigma(\gamma)$.
\end{theorem}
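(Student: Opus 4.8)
The plan is to run the argument of Theorem~1 with $g=f$: this collapses the auxiliary function $\alpha$ into $p$ itself, so that $\psi_1$ is replaced by $\psi_2$ and the proof becomes a one–variable computation. Let $f\in B_{n+1}^\sigma(\gamma)$ and set
\[
p(z)=\frac{L_{n+1}^\sigma f(z)}{L_n^\sigma f(z)}.
\]
Since $L_n^\sigma f(z)=z+\cdots$ we have $p(0)=1$, so $p$ is regular near the origin and $(p(z)-\gamma)/(1-\gamma)=1+p_1z+\cdots$, the normalization required in Definition~5. (As usual one needs $L_n^\sigma f$ nonvanishing on $E$ for $p$ to be defined throughout $E$; since $z[L_n^\sigma f]^\prime=L_{n+1}^\sigma f$ and $f\in B_{n+1}^\sigma(\gamma)$ already forces $L_{n+1}^\sigma f\neq 0$ on $E\setminus\{0\}$, this is the standard routine verification of this literature, which I take for granted.)

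The computational core is the Salagean-type recursion $z[L_n^\sigma f(z)]^\prime=L_{n+1}^\sigma f(z)$, immediate from $L_{n+1}^\sigma=D^{n+1}I^\sigma=z(D^nI^\sigma)^\prime$, i.e. from differentiating the series in~(\ref{1})--(\ref{2}) term by term. Writing $L_{n+1}^\sigma f=p\,L_n^\sigma f$ and differentiating once more gives $L_{n+2}^\sigma f=(zp^\prime+p^2)L_n^\sigma f$, hence
\[
\frac{L_{n+2}^\sigma f(z)}{L_{n+1}^\sigma f(z)}=p(z)+\frac{zp^\prime(z)}{p(z)}=\psi_2\bigl(p(z),zp^\prime(z)\bigr),
\]
where $\psi_2(u,v)=u+v/(\xi+u)$ is the example listed after Definition~4, taken with $\xi=0$.

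It then remains only to verify $p\in P(\Psi_\gamma)$ and to invoke Lemma~1. The list after Definition~4 already records $\psi_2\in\Psi_\gamma$; the hypothesis $f\in B_{n+1}^\sigma(\gamma)$ says precisely that Re$\,\psi_2(p(z),zp^\prime(z))\sim\gamma$ for $z\in E$; and the pair $(p(z),zp^\prime(z))$ lies in $\Omega=[C-\{0\}]\times C$ because $p$ cannot vanish in $E$ (a zero of $p$ would produce a simple pole of $\psi_2(p,zp^\prime)=L_{n+2}^\sigma f/L_{n+1}^\sigma f$, which is regular on $E$, and $p(0)=1$). Lemma~1 now gives Re$\,p(z)\sim\gamma$, that is, $f\in B_n^\sigma(\gamma)$, which is the desired inclusion.

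The one step I expect to need a little care is the extreme value $\gamma=0$, equivalently $\xi=0$, which sits on the boundary $\xi+\gamma=0$ of the range for which $\psi_2$ belongs to $\Psi_\gamma$. There one can instead apply Lemma~1 to $\psi_1(u,v)=u+v/(\xi+\alpha)$ with $\alpha(z)=p(z)$ and small $\xi>0$, and then let $\xi\to 0^+$. Apart from that edge case the theorem is exactly the bookkeeping displayed above, which is also why the authors omit it and its $\sigma$-analogue $B_n^\sigma(\gamma)\subset B_n^{\sigma+1}(\gamma)$ — the latter handled identically, using $\psi_2$ with $\xi=1$ and the identity~(\ref{4}) in place of the Salagean recursion, so that the boundary issue does not even arise.
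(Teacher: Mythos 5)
Your proof follows exactly the route the paper intends when it declares Theorems 3 and 4 provable \emph{mutatis mutandis} from Theorems 1 and 2: setting $g=f$ collapses $\alpha$ into $p$ and replaces $\psi_1$ by $\psi_2(u,v)=u+v/(\xi+u)$ with $\xi=0$, and your computation $L_{n+2}^\sigma f/L_{n+1}^\sigma f=p+zp^\prime/p$ from the recursion $z[L_n^\sigma f]^\prime=L_{n+1}^\sigma f$ is correct. The one genuine defect is your patch for the case $\gamma=0$. The limiting argument you propose is circular: $\psi_1(u,v)=u+v/(\xi+\alpha)$ belongs to $\Psi_\gamma$ only under $\xi+\mathrm{Re}\,\alpha>0$, and with $\alpha=p$ that is essentially the conclusion you are trying to establish; moreover the hypothesis $f\in B_{n+1}^\sigma(0)$ controls $\mathrm{Re}\bigl(p+zp^\prime/p\bigr)$ and not $\mathrm{Re}\bigl(p+zp^\prime/(\xi+p)\bigr)$ for $\xi>0$, so the premise of Lemma 1 for the perturbed $\psi$ is not available and there is nothing to pass to the limit in.

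The correct repair is cheaper than a limit argument. The inequality $\xi+\gamma>0$ quoted after Definition 4 is only a sufficient condition for $\psi_2\in\Psi_\gamma$; when $\xi=\gamma=0$ one simply verifies condition (c) of Definition 4 directly. On the critical set $u=u_2i$ (with $u_2\neq0$, so that $(u,v_1)\in\Omega=[C-\{0\}]\times C$) and $2v_1\leq-(1+u_2^2)$ one has
\[
\mathrm{Re}\,\psi_2(u_2i,v_1)=\mathrm{Re}\left(u_2i+\frac{v_1}{u_2i}\right)=0\leq\gamma=0,
\]
so $\psi_2$ with $\xi=0$ lies in $\Psi_0$ after all, and Lemma 1 applies uniformly for every admissible $\gamma$. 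With that substitution for your edge-case paragraph the argument is complete and coincides with the proof the paper omits; your closing remark about Theorem 4 (where $\xi=1$ and no boundary issue arises) is also correct.
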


\begin{theorem}
$B_n^\sigma(\gamma)\subset B_n^{\sigma+1}(\gamma)$.
\end{theorem}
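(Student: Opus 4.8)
The plan is to mimic the proof of Theorem 2, since Theorem 4 asserts $B_n^\sigma(\gamma)\subset B_n^{\sigma+1}(\gamma)$, which is the one-function analogue of the two-function statement in Theorem 2. First I would let $f\in B_n^\sigma(\gamma)$ and set
\[
p(z)=\frac{L_{n+1}^{\sigma+1}f(z)}{L_n^{\sigma+1}f(z)},
\]
noting that $(p(z)-\gamma)/(1-\gamma)$ is regular in $E$ with value $1$ at the origin (the lowest-order terms of numerator and denominator both being $z$), so $p$ has the normalization required for membership in $P(\Psi_\gamma)$.

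Next, using the identity~(\ref{4}) applied to both $f$ in place of the numerator index and denominator index, one rewrites
\[
\frac{L_{n+1}^\sigma f(z)}{L_n^\sigma f(z)}=\frac{L_{n+2}^{\sigma+1}f(z)+L_{n+1}^{\sigma+1}f(z)}{L_{n+1}^{\sigma+1}f(z)+L_n^{\sigma+1}f(z)}.
\]
Dividing numerator and denominator by $L_n^{\sigma+1}f(z)$ and carrying out the same manipulation as in Theorem 2 — writing $L_{n+2}^{\sigma+1}f/L_n^{\sigma+1}f = p(z)^2 + zp'(z)$ after using $D^{n+2}=z(D^{n+1})'$, or more directly observing $L_{n+2}^{\sigma+1}f/L_{n+1}^{\sigma+1}f = p(z)+zp'(z)/p(z)$ — one arrives at
\[
\frac{L_{n+1}^\sigma f(z)}{L_n^\sigma f(z)}=p(z)+\frac{zp^\prime(z)}{\xi+p(z)}=\psi_2(p(z),zp^\prime(z))
\]
with $\xi=1$. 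Here the key point, which makes $\psi_2$ the right choice rather than $\psi_1$, is that the "$\alpha$" appearing in the denominator is now $p(z)$ itself, not a quantity coming from an auxiliary function $g$.

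Finally I would verify the hypotheses of Lemma 1 for $p\in P(\Psi_\gamma)$ with $\psi=\psi_2$: continuity on $\Omega=[\C\setminus\{-1\}]\times\C$ is clear, $\psi_2(1,0)=1$ has positive real part, and since $\xi+\gamma=1+\gamma>0$ the function $\psi_2$ lies in $\Psi_\gamma$ by the example (ii) quoted after Definition 4. Because $f\in B_n^\sigma(\gamma)$ means exactly $\mathrm{Re}\,\psi_2(p(z),zp^\prime(z))\sim\gamma$, Lemma 1 yields $\mathrm{Re}\,p(z)\sim\gamma$, i.e. $\mathrm{Re}\,L_{n+1}^{\sigma+1}f(z)/L_n^{\sigma+1}f(z)\sim\gamma$, which is precisely $f\in B_n^{\sigma+1}(\gamma)$. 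The only mildly delicate step is checking that $p(z)+p(z)=-1$ (equivalently $L_n^{\sigma+1}f(z)=0$) never occurs in $E$, so that $(p(z),zp'(z))\in\Omega$ throughout; this follows because $L_n^{\sigma+1}f$ has the leading term $z$ and the standard argument (as used implicitly in~\cite{BO, JL}) shows it is non-vanishing on $E\setminus\{0\}$ under the stated hypothesis, so the composition is well defined and the lemma applies.
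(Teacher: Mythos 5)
Your proof is correct and is exactly the \emph{mutatis mutandis} argument the paper has in mind when it omits the proof of this theorem: specialize the Theorem 2 computation to $g=f$, so that $\alpha(z)=p(z)$ and the relevant test function becomes $\psi_2(u,v)=u+v/(1+u)$ with $\xi+\gamma=1+\gamma>0$. The only blemish is the garbled non-vanishing remark at the end (the condition $p(z)\neq-1$ is equivalent, via identity~(\ref{4}), to $L_n^{\sigma}f(z)\neq 0$, not $L_n^{\sigma+1}f(z)\neq 0$), but this is a side issue the paper itself glosses over.
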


Next we investigate the closure property of the classes $B_n^\sigma(\gamma)$ and $K_n^\sigma(\beta,\gamma)$ under the integral transformation:
\begin{equation}
F_c(z)=\frac{c+1}{z^c}\int_0^zt^{c-1}f(t)dt,\;\;\;c>-1.\, \label{5}
\end{equation}

The integral ~(\ref{5}), known as Bernadi integral \cite{SDB}, has attracted much attention recently. It has been used to prove that the solutions of certain (linear and nonlinear) differential equations are analytic and or univalent in the unit disk \cite{KOB, LMZ}. The well known Libera integral corresponds to $c = 1$.
 
In this section we show that if $f(z)$ belongs to any of the two classes, then so is $F_c(z)$. Equivalently, we would show that if $I^\sigma f(z)$ belongs to any of $S_n(\gamma)$ and $K_n(\beta,\gamma)$, then so is $I^\sigma F_c(z)$ (see Remark 1). However it is easy to see from ~(\ref{2}) and ~(\ref{5}) that:
\[
I^\sigma F_c(z)=I^\sigma\left(\frac{c+1}{z^c}\int_0^zt^{c-1}f(t)dt\right)=\frac{c+1}{z^c}\int_0^zt^{c-1}I^\sigma f(t)dt=F_c(I^\sigma f(z))
\]

Thus in order to prove the following results, which deal with the integral operator defined by ~(\ref{5}), it is sufficient to prove that the classes $S_n(\gamma)$ and $K_n(\beta,\gamma)$ are closed under the transformation. Our results are the following.

\begin{theorem}
The class $B_n^\sigma(\gamma)$ is closed under $F_c$, where $c+\gamma>0$.
\end{theorem}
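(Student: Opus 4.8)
The plan is to run exactly the Miller--Mocanu scheme used in the proofs of Theorems 1 and 2, but with the admissible function $\psi_2$ instead of $\psi_1$, the free parameter $\xi$ being taken equal to $c$. First I would reduce everything to a Bernadi identity at the level of the operator $L_n^\sigma$: since $D^n$ commutes with the integral in ~(\ref{5}) (both act diagonally on power series), the relation $I^\sigma F_c(z)=F_c(I^\sigma f(z))$ displayed in the text upgrades to $L_n^\sigma F_c(z)=F_c(L_n^\sigma f(z))$. Differentiating $z^c F_c(h)(z)=(c+1)\int_0^z t^{c-1}h(t)\,dt$ with $h=L_n^\sigma f$, and using $z[L_m^\sigma F_c(z)]'=L_{m+1}^\sigma F_c(z)$, this yields
\[
L_{n+1}^\sigma F_c(z)=(c+1)\,L_n^\sigma f(z)-c\,L_n^\sigma F_c(z).
\]

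Next I would set $p(z)=L_{n+1}^\sigma F_c(z)/L_n^\sigma F_c(z)$; this is regular in $E$ with $p(0)=1$, so $(p(z)-\gamma)/(1-\gamma)=1+p_1z+\cdots$. Rewriting the identity as $(c+1)L_n^\sigma f(z)=(p(z)+c)\,L_n^\sigma F_c(z)$ and applying logarithmic differentiation (i.e. $z\tfrac{d}{dz}\log$) to both sides, then using $z[L_n^\sigma f]'/L_n^\sigma f=L_{n+1}^\sigma f/L_n^\sigma f$ and $z[L_n^\sigma F_c]'/L_n^\sigma F_c=p$, I obtain
\[
\frac{L_{n+1}^\sigma f(z)}{L_n^\sigma f(z)}=p(z)+\frac{zp'(z)}{p(z)+c}=\psi_2\big(p(z),\,zp'(z)\big),\qquad \xi=c.
\]

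Since $f\in B_n^\sigma(\gamma)$, the left-hand side satisfies $\mathrm{Re}\,\psi_2(p(z),zp'(z))\sim\gamma$ for $z\in E$. The hypothesis $c+\gamma>0$ is precisely the requirement $\xi+\gamma>0$ that places $\psi_2$ in $\Psi_\gamma$, with $\Omega=[C-\{-c\}]\times C$; and $(p(z),zp'(z))\in\Omega$ because $p(z)=-c$ would force $L_n^\sigma f(z)=0$, impossible for $z\neq0$ since $f\in B_n^\sigma(\gamma)$, while $p(0)=1\neq-c$. Hence $p\in P(\Psi_\gamma)$, and Lemma 1 gives $\mathrm{Re}\,p(z)\sim\gamma$, that is $\mathrm{Re}\,\big(L_{n+1}^\sigma F_c(z)/L_n^\sigma F_c(z)\big)\sim\gamma$, which is exactly $F_c\in B_n^\sigma(\gamma)$.

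The only real work is in the first two steps — isolating the correct operator identity from ~(\ref{5}) and recognizing the outcome as $\psi_2$ with $\xi=c$ — after which Lemma 1 closes the argument; note in particular that the condition $c+\gamma>0$ enters the proof only through the admissibility of $\psi_2$. The point that calls for a word of care, exactly as in the proofs of Theorems 1 and 2, is that $p$ be regular in $E$, i.e. $L_n^\sigma F_c(z)\neq0$ for $z\in E\setminus\{0\}$; this is inherited from the corresponding property of $L_n^\sigma f$ through the representation $L_n^\sigma F_c=F_c(L_n^\sigma f)$.
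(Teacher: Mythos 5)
Your proof is correct and follows essentially the same route as the paper: the paper reduces to showing $S_n(\gamma)$ is closed under $F_c$ via the identity $cF_c(z)+z(F_c(z))'=(c+1)f(z)$ and sets $p=D^{n+1}F_c/D^nF_c$, arriving at the same relation $p(z)+zp'(z)/(c+p(z))$ and invoking Lemma 1 with the admissible function $u+v/(\xi+u)$, $\xi=c$; your version merely phrases the computation directly at the level of $L_n^\sigma$. As a minor point in your favour, you correctly identify this admissible function as $\psi_2$ (the paper's proof mislabels it $\psi_1$), and your explicit checks of regularity of $p$ and of $(p,zp')\in\Omega$ are details the paper leaves implicit.
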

\begin{proof}
We would show that $S_n(\gamma)$ is closed under $F_c$. From ~(\ref{5}) we have
\begin{equation}
cF_c(z)+z(F_c(z))^\prime=(c+1)f(z),\, \label{6}
\end{equation}
so that
\[
\frac{D^{n+1}f(z)}{D^nf(z)}=\frac{cD^{n+1}F_c(z)+D^{n+2}F_c(z)}{cD^nF_c(z)+D^{n+1}F_c(z)}
\]
So if we let $p(z)=\tfrac{D^{n+1}F_c(z)}{D^nF_c(z)}$, we find that
\[
\frac{D^{n+1}f(z)}{D^nf(z)}=p(z)+\frac{zp^\prime (z)}{c+p(z)}=\psi(p(z),zp^\prime (z))
\]
Now applying Lemma 1 on  $\psi_1(u,v)=u+v/(\xi+u)$, with $\xi=c$, we have the implication that Re$\psi(p(z), zp^\prime(z))\sim\gamma\Rightarrow$ Re$p(z)\sim\gamma$ and this proves the theorem.
\end{proof}
\begin{theorem}
The class $K_n^\sigma(\beta,\gamma)$ is closed under $F_c$, where $c+\gamma>0$.
\end{theorem}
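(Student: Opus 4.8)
The plan is to follow the strategy of Theorem 5 combined with the auxiliary-function device used in Theorems 1 and 2. By the reduction noted just before Theorem 5 (namely $I^\sigma F_c(z)=F_c(I^\sigma f(z))$), it is enough to prove that $K_n(\beta,\gamma)$ is closed under $F_c$. So I would take $f\in K_n(\beta,\gamma)$ together with a companion $g\in S_n(\gamma)$ (here $0\le\gamma<1$) for which $\mathrm{Re}\,\frac{D^{n+1}f(z)}{D^ng(z)}\sim\beta$, and denote by $G_c$ the Bernadi transform of $g$. By Theorem 5 we already know $G_c\in S_n(\gamma)$, so $q(z):=\frac{D^{n+1}G_c(z)}{D^nG_c(z)}$ satisfies $\mathrm{Re}\,q(z)>\gamma\ge 0$ throughout $E$.

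Next I would express the quotient $\frac{D^{n+1}f(z)}{D^ng(z)}$ in terms of $F_c$ and $G_c$. Applying $D^{n+1}$ to the identity $cF_c(z)+z(F_c(z))^\prime=(c+1)f(z)$ and $D^n$ to the corresponding identity relating $g$ and $G_c$, one gets
\[
\frac{D^{n+1}f(z)}{D^ng(z)}=\frac{cD^{n+1}F_c(z)+D^{n+2}F_c(z)}{cD^nG_c(z)+D^{n+1}G_c(z)}.
\]
Setting $p(z)=\frac{D^{n+1}F_c(z)}{D^nG_c(z)}$ and using $D^{n+2}F_c(z)=z(D^{n+1}F_c(z))^\prime$ together with $D^{n+1}G_c(z)=q(z)\,D^nG_c(z)$, a short computation collapses the right-hand side to
\[
\frac{D^{n+1}f(z)}{D^ng(z)}=p(z)+\frac{zp^\prime(z)}{c+q(z)}=\psi(p(z),zp^\prime(z)).
\]

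Finally I would invoke Lemma 1 with $\psi_1(u,v)=u+v/(\xi+\alpha)$, taking $\xi=c$ and $\alpha(z)=q(z)$: since $\mathrm{Re}\,q(z)>\gamma$ we have $c+\mathrm{Re}\,\alpha(z)>c+\gamma>0$, so $\psi_1\in\Psi_\beta$ and the conditions of Definition 5 hold. Hence $\mathrm{Re}\,\psi(p(z),zp^\prime(z))\sim\beta$ — which is exactly the hypothesis $f\in K_n(\beta,\gamma)$ — forces $\mathrm{Re}\,p(z)\sim\beta$, i.e. $\mathrm{Re}\,\frac{D^{n+1}F_c(z)}{D^nG_c(z)}\sim\beta$. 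Since $G_c\in S_n(\gamma)$, this says $F_c\in K_n(\beta,\gamma)$, and the theorem follows. The only step that needs genuine care is the bookkeeping that yields the clean form $p+zp^\prime/(c+q)$ and the check that the external function $\alpha=q$ satisfies $c+\mathrm{Re}\,\alpha>0$ under the stated hypothesis $c+\gamma>0$; everything else is a direct repetition of the earlier arguments.
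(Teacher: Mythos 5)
Your proposal is correct and follows essentially the same route as the paper's own proof: the same reduction via $I^\sigma F_c=F_c(I^\sigma f)$, the same identity $cF_c+z F_c^\prime=(c+1)f$, the same choice $p(z)=D^{n+1}F_c(z)/D^nG_c(z)$ leading to $p+zp^\prime/(c+q)$, and the same application of Lemma 1 with $\psi_1(u,v)=u+v/(\xi+\alpha)$, $\xi=c$ (your $q$ is the paper's $\alpha$). The bookkeeping you flag does indeed collapse as claimed, so nothing is missing.
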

\begin{proof}
We would prove that $K_n(\beta,\gamma)$ is closed under $F_c$. From ~(\ref{6}) we have
\[
\frac{D^{n+1}f(z)}{D^ng(z)}=\frac{cD^{n+1}F_c(z)+D^{n+2}F_c(z)}{cD^nG_c(z)+D^{n+1}G_c(z)}
\]
where for $g\in B_n^\sigma(\gamma)$, $0\leq\gamma<1$, $G_c(z)$ given by
\[
G_c(z)=\frac{c+1}{z^c}\int_0^zt^{c-1}g(t)dt,\;\;\;c>-1,
\]
also belongs to $B_n^\sigma(\gamma)$ by Theorem 5.
So if we let $p(z)=\tfrac{D^{n+1}F_c(z)}{D^nG_c(z)}$, we find that
\[
\frac{D^{n+1}f(z)}{D^ng(z)}=p(z)+\frac{zp^\prime (z)}{c+\alpha(z)}=\psi(p(z),zp^\prime (z))
\]
where $\alpha(z)=\tfrac{D^{n+1}G_c(z)}{D^nG_c(z)}$. Since $G_c\in B_n^\sigma(\gamma)$, we have Re$\alpha(z)+c>c+\gamma>0$. Again applying Lemma 1 on  $\psi_1(u,v)=u+v/(\xi+\alpha)$, with $\xi=c$, we have the implication that Re$\psi(p(z), zp^\prime(z))\sim\beta\Rightarrow$ Re$p(z)\sim\beta$. This completes the proof of the theorem.
\end{proof}

\begin{remark} First we remark that with specific choices of the parameters $n$, $\sigma$, $\beta$ and $\gamma$ in the two generalized families of functions, our results translate to many known results. It is however noteworthy the conciseness of our proofs as compared to what can be found in earlier literatures. In particular, we appreciate the brevity and ease brought about by the lemma when we consider the proofs of the main results (Theorems 1-6) of Liu \cite{JL} and Theorems 2 and 3 of Obradovic \cite{MO}, which altogether are particular cases of our results above as contained in Remark 1. Further the study of the generalized families could lead to deeper investigation of the univalent family of functions.
\end{remark}
 \medskip

\vspace{10pt}

\hspace{-4mm}{\small{Received}}

\vspace{-12pt}
\ \hfill \
\begin{tabular}{c}
{\small\em  Department of Mathematics}\\
{\small\em  University of Ilorin}\\
{\small\em  Ilorin, Nigeria}\\
{\small\em  {\tt opoola\_stc@yahoo.com}} \\
{\small\em  {\tt ummusalamah.kob@unilorin.edu.ng}} \\
\end{tabular}

\end{document}